\renewcommand\bf\bfseries
\newcommand{\n}{\negthinspace}
\newcommand{\nn}{\negthinspace\negthinspace\negthinspace\negthinspace\negthinspace\negthinspace\negthinspace\negthinspace}
\newcommand{\R}{\operatorname{\mathbb R}}
\newcommand{\norm}[1]{\ensuremath{\left|\left|#1\right|\right|}}
\newcommand{\half}{\frac{1}{2}}
\newcommand{\limint}{\int\limits}
\newcommand{\dx}{\: \mathrm{d}x}
\newtheorem{theorem}{Theorem}
\newtheorem{definition}{Definition}
\newtheorem{lemma}[theorem]{Lemma}
\newtheorem{remark}{Remark}
\DeclareMathOperator{\Div}{\mathbf{div}}
\newcommand*{\defeq}{\mathrel{\vcenter{\baselineskip0.5ex \lineskiplimit0pt
                     \hbox{\scriptsize.}\hbox{\scriptsize.}}}
                     =}
\begin{document}
\title{On a saddle point problem arising from magneto-elastic coupling}
\author{Mané Harutyunyan, Bernd Simeon} 
\maketitle


\section{Introduction}\label{sec:Intro}

Coupled systems of partial differential equations arise in various applications where the interplay of different physical phenomena is of major relevance and cannot be neglected. Typical examples for such multiphysics problems are fluid-structure interfaces, thermo-elasticity, and magneto-elasticity, which is the problem class we study here. The computational treatment of such coupled problems has seen great advances over the last decade, but the underlying problem structure is often not fully understood nor taken into account when using black box simulation codes for the individual subproblems.\\ 
For this reason, a careful analysis of the properties of a coupled problem is an important task, and in this paper we aim at such a comprehensive treatment for linear magneto-elastostatic solids. Possible application fields of this material class include robotics, vibration control, hydraulics and sonar systems. Magneto-elastic materials come also into operation as variable-stiffness devices, sensors and actuators in mechanical systems or artificial muscles.\\
Basic references providing a theoretical background on magneto-elastic coupling are, among others, Brown \cite{Brown1966}, Pao \cite{Pao1978}, Truesdell and Taupin \cite{TruesdellTaupin1960} and Hutter and Van den Veen \cite{HutterVandenVeen1978}, while e.g. Engdahl (ed.) \cite{Engdahl2000} gives a thorough insight into the modeling and application of magnetostrictive materials.\\
The paper is organized as follows. In the first section, we derive a coupled magneto-elastic model for the stationary case based on linear constitutive relations for piezomagnetic materials. Section 2 is concerned with the question of existence and uniqueness of the solution to the coupled saddle point problem obtained in Section 1. After discussing important properties of the bilinear forms of the resulting model, we show that the coupled magneto-elastic bilinear form satisfies an inf-sup condition that implies both the uniqueness of the solution and the stability of the coupled saddle point problem. In Section 3 we transfer the results obtained for the continuous case to the discrete case and prove that the inf-sup condition is also fulfilled in the discrete case using standard Lagrangian finite elements, as long as the elastic and magnetic basis functions are chosen to be polynomials of the same degree. Finally, a short conclusion can be found in Section 4.

\section{The coupled problem} \label{sec:Section1}

We consider materials that are {\em magnetostrictive}, i.e., that change their shape or dimensions during the process of magnetization. The material is assumed to be homogeneous at the macroscopic level  and forms a body whose undeformed state we denote by the domain $\Omega\in\R^{3}$. We aim at examining the behavior due to the influence of external magnetic and mechanical fields. Our model is based on a linearized theory, which means that only small strains and small deviations from the initial magnetized state are considered. This implies that the change of the dimensions of the magnetostrictive body does not affect the magnetic field outside the material and the magnetoelastic coupling only occurs in the material itself.

The deformation of the body is characterized by the displacement field $\bm{u}:\Omega\rightarrow \R^{3}$ and the strain tensor $\bm{\epsilon}:\Omega\rightarrow \R^{3\times 3},
 \bm{\epsilon}(\bm{u}) = (\nabla \bm{u} + \nabla \bm{u}^T)/2$, while the magnetic influence is described by the magnetic field $\bm{H}:\Omega\rightarrow \R^{3}$. Furthermore, $\bm{\sigma}:\Omega \rightarrow \R^{3\times 3}$ and $\bm{B}:\Omega \rightarrow \R^{3}$ describe the stress tensor and the magnetic flux density.
Although magnetostrictive materials show a non-linear behavior in general, 
they can be reasonably characterized
in case of small strains and magnetizations by means of the linearized constitutive equations of piezomagnetic materials \cite{Engdahl2000,IEEE1990}, which read
\begin{equation}\label{eq:constitutives}
\bm{\sigma}(\bm{u},\Psi)=\bm{C^{H}}\cdot\bm{\epsilon}(\bm {u})+\bm{e}\cdot \nabla \Psi, \quad
\bm{B}(\bm{u},\Psi)=\bm{e^{T}}\cdot\bm{\epsilon}(\bm{u})-\bm{\mu^{\epsilon}} \cdot \nabla \Psi.
\end{equation}
Here $\bm{C}^{H}$ denotes the elastic stiffness matrix for a constant magnetic field, $\bm{e}$ the magneto-elastic coupling matrix and $\bm{\mu}^{\epsilon}$ the magnetic permeability for constant strain. Note that
$\bm{\sigma}, \bm{C}^{H}, \bm{\epsilon}, \bm{e}$ and $\bm{\mu}^{\epsilon}$ are generally tensors of order 2 and higher and have been reduced to matrices and vectors using symmetries and the Voigt notation \cite{Voigt1910}. The existence of the magnetic scalar potential $\Psi \in \R$ with $\bm{H} = -\nabla \Psi$ is justified by assuming a static magnetic field that is generated by permanent magnets rather than a current-carrying coil.

The strong and the weak form of the coupled system can be derived by using a minimum energy principle for the total energy $W_{mag}(\bm{u}, \Psi) - W_{el}(\bm{u}, \Psi)$,
which is the difference between the internal magnetic and mechanical energies \cite{ClaeyssenEtal2006} defined as
\begin{equation}\label{eq:energies}
\begin{array}{rcl} \displaystyle
	W_{mag}(\bm{u},\Psi)&=& \displaystyle \frac{1}{2}\int\limits_{\Omega}\bm{H}(\Psi)\cdot\bm{B}(\bm{u},\Psi)\: \mathrm{d}x 
	- \int\limits_{\Gamma_{mag,N}}\Psi\tilde{B}\: \mathrm{d}s,  \\
	W_{\mathit{el}}(\bm{u}, \Psi) &=&
	\displaystyle \frac{1}{2}\int\limits_{\Omega}\bm{\sigma}(\bm{u},\Psi):\bm{\epsilon}(\bm{u})\: \mathrm{d}x
	 -\int\limits_{\Gamma_{N}}\bm{u}\bm{\tau} \: \mathrm{d}s, 
\end{array}
\end{equation}
where $\R^{3} \ni \bm{\tau} = \bm{\sigma}(\bm{u},\Psi)\cdot\bm{n}$ and  $\R \ni \tilde{B}=\bm{B}(\bm{u},\Psi)\cdot\bm{n}$ define the elastic and magnetic surface forces with outer normal vector $\bm{n}\in \R^{3}$ on the Neumann boundaries $\Gamma_{N}$ and $\Gamma_{mag,N}$. Inserting the constitutive equations \eqref{eq:constitutives} into the energy expressions \eqref{eq:energies} and applying the calculus of variations along with Gauss's  divergence theorem, we can derive the strong form of the coupled problem:\\
\noindent
\textbf{Problem $(CP)$}. Find $(\bm{u},\Psi) \in (C^{2}(\Omega))^{3}\times C^{2}(\Omega)$ such that
\[
\begin{array}{rcl}
\Div(\bm{C^{H}}:\bm{\epsilon}(\bm{u})) + \half \Div(\bm{e}\cdot\nabla\Psi)  &=& \bm{0} \text{\,\, in\,\,} \Omega,\\
\half\Div(\bm{e}^{T}:\bm{\epsilon}(\bm{u}))-\Div(\bm{\mu^{\epsilon}}\cdot\nabla\Psi) &=& 0 \text{\,\, in\,\,} \Omega,\\
\end{array}
\]
with Dirichlet boundary conditions $\bm{u}=\bm{0}$ on $\Gamma_{D}$ and $\Psi= 0$ on $\Gamma_{mag,D}$\\
and Neumann boundary conditions 
\[
\left(\bm{C^{H}}:\bm{\epsilon}(\bm{u})+\half\bm{e}\cdot\nabla\Psi\right)\bm{n} = \bm{\tau} \quad \text{on\,\,} \Gamma_{N}, \quad
\left(\half\bm{e}^{T}:\bm{\epsilon}(\bm{u}) - \bm{\mu^{\epsilon}}\cdot\nabla\Psi\right)\bm{n} = \tilde{B} \quad \text{on\,\,} \Gamma_{mag,N}.
\]
Introducing the function spaces
\begin{align*}
\mathcal{V} &\defeq (H^{1}(\Omega))^{3},   \quad
\mathcal{V}_{0}  \defeq \{ \bm{v} \in \mathcal{V}: v_{i} = 0 \text{\,on\,} \Gamma_{D},\, i = 1,2,3 \}, \\
\mathcal{M} & \defeq H^{1}(\Omega), \qquad
\mathcal{M}_{0}  \defeq \{ \Phi \in \mathcal{M}: \Phi = 0 \text{\,on\,} \Gamma_{m,D} \},
\end{align*}
the corresponding weak form of the coupled problem reads:

\noindent\textbf{Problem $(CP)^{w}$}. Find $(\bm{u},\Psi) \in \mathcal{V}_{0} \times \mathcal{M}_{0}$ such that
\begin{eqnarray}
a(\bm{u,v})+c(\bm{v},\Psi) &= & l(\bm{v}) \qquad\qquad \;\;\, \forall \bm{v}\in \mathcal{V}_{0},  \label{eq:cpa}\\ 
d(\bm{u},\Phi) - b(\Psi,\Phi) &= & m(\Phi) \qquad\qquad \forall \Phi \in \mathcal{M}_{0}.
\label{eq:cpb}
\end{eqnarray} 
Here, 
\begin{eqnarray}
a(\bm{u},\bm{v}) &=& \limint_{\Omega}  \nabla\bm{v}:(\bm{C^{H}}:\bm{\epsilon}(\bm{u})) \: \mathrm{d}x = \limint_{\Omega}(\bm{C^{H}}:\bm{\epsilon}(\bm{u})):\bm{\epsilon}(\bm{v}) \: \mathrm{d}x, \\  \label{eq:bilinearforma}
b(\Psi,\Phi) &=& \limint_{\Omega} \nabla\Phi\cdot(\bm{\mu^{\epsilon}}\cdot\nabla\Psi) \: \mathrm{d} x, \label{eq:bilinearformb}
\end{eqnarray}
\begin{eqnarray}
c(\bm{v},\Psi) &=& \half \limint_{\Omega} \nabla\bm{v}:(\bm{e}\cdot \nabla\Psi) \: \mathrm{d}x = \half \limint_{\Omega} \bm{\epsilon}(\bm{v})^{T} : (\bm{e}\nabla \Psi) \dx \nonumber \\ 
&=&
 \half \limint_{\Omega} \nabla \Psi \cdot (\bm{e}^{T} : \bm{\epsilon}(\bm{v})) \dx, \\ \label{eq:bilinearformc}
l(\bm{v}) &=& \limint_{\Gamma_{N}} \bm{v}\cdot\bm{\tau} \: \mathrm{d}s,  \qquad  m(\Phi) = \limint_{\Gamma_{m,N}} \Phi \tilde{B} \: \mathrm{d}s. \label{eq:linearforms}
\end{eqnarray}
The bilinear forms $a:\mathcal{V}_{0}\times \mathcal{V}_{0}\rightarrow \R$ and $b:\mathcal{M}_{0}\times \mathcal{M}_{0} \rightarrow \R$ correspond to the purely elastic and purely magnetic influence, respectively, the bilinear form $c:\mathcal{V}_{0}\times \mathcal{M}_{0} \rightarrow \R$ describes the coupling between the two fields and the linear functionals $l: \mathcal{V}_{0} \rightarrow \R$ and $m:\mathcal{M}_{0}\rightarrow \R$ represent the magnetic and elastic surface loads. 

Before continuing with the analysis of the coupled problem (\ref{eq:cpa})-(\ref{eq:cpb}),we point out that the minus sign in front of the bilinear form $b$ in (\ref{eq:cpb}) is a consequence of the choice of independent variables in the coupled constitutive equations as well as the resulting energy formulation and strongly influences the properties of the obtained coupled system.

\section{Existence and uniqueness of the solution}

We continue with several important properties of the above bilinear forms that will be used below to prove the existence and uniqueness of the solution to the coupled problem.

\begin{lemma}\label{continuity}
The bilinear forms $a:\mathcal{V}_{0} \times \mathcal{V}_{0} \rightarrow \R$,
$b:\mathcal{M}_{0}\times \mathcal{M}_{0} \rightarrow \R$ and $c:\mathcal{V}_{0} \times \mathcal{M}_{0} \rightarrow \R$ are continuous.
\end{lemma}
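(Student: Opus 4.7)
The plan is to handle each of the three bilinear forms separately by a pointwise estimate on the integrand followed by the Cauchy--Schwarz inequality in $L^{2}(\Omega)$. The structural ingredient common to all three is the standing physical assumption that the material tensors $\bm{C}^{H}$, $\bm{\mu}^{\epsilon}$ and $\bm{e}$ have entries in $L^{\infty}(\Omega)$ (in fact constant, since the body is taken to be homogeneous). This provides finite constants $M_{C},M_{\mu},M_{e}$ such that, pointwise almost everywhere on $\Omega$, $|\bm{C}^{H}:\bm{A}|\le M_{C}|\bm{A}|$, $|\bm{\mu}^{\epsilon}\cdot\bm{w}|\le M_{\mu}|\bm{w}|$ and $|\bm{e}\cdot\bm{w}|\le M_{e}|\bm{w}|$ for every $\bm{A}\in\R^{3\times 3}$ and $\bm{w}\in\R^{3}$, the right-hand sides using the Frobenius/Euclidean norms.

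For $a$ I would first bound the integrand pointwise by $|(\bm{C}^{H}:\bm{\epsilon}(\bm{u})):\bm{\epsilon}(\bm{v})|\le M_{C}|\bm{\epsilon}(\bm{u})||\bm{\epsilon}(\bm{v})|$, integrate, and apply Cauchy--Schwarz in $L^{2}(\Omega)$ to obtain $|a(\bm{u},\bm{v})|\le M_{C}\|\bm{\epsilon}(\bm{u})\|_{L^{2}}\|\bm{\epsilon}(\bm{v})\|_{L^{2}}$. The pointwise bound $|\bm{\epsilon}(\bm{u})|\le|\nabla\bm{u}|$ (immediate from the definition of the symmetric gradient) then yields $|a(\bm{u},\bm{v})|\le M_{C}\|\bm{u}\|_{\mathcal{V}_{0}}\|\bm{v}\|_{\mathcal{V}_{0}}$. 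The same template applies verbatim to $b$, producing $|b(\Psi,\Phi)|\le M_{\mu}\|\Psi\|_{\mathcal{M}_{0}}\|\Phi\|_{\mathcal{M}_{0}}$, and to $c$, where the prefactor $\tfrac{1}{2}$ is carried through to give $|c(\bm{v},\Psi)|\le \tfrac{1}{2}M_{e}\|\bm{v}\|_{\mathcal{V}_{0}}\|\Psi\|_{\mathcal{M}_{0}}$.

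There is no substantial obstacle here: the argument is a textbook application of Cauchy--Schwarz together with boundedness of the material coefficients. The only point that deserves to be stated precisely is the choice of matrix/tensor norm used on the material tensors, namely the operator norm induced by the Euclidean/Frobenius norms on their arguments, since this is what turns the algebraic bounds above into clean $L^{2}$-estimates. Keeping explicit track of the constants $M_{C}, M_{\mu}, M_{e}$ is worthwhile, because these continuity constants will need to be compared to the coercivity constants of $a$ and $b$ in the inf--sup analysis that follows in the next section.
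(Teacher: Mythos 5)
Your proof is correct, and it rests on the same underlying tool as the paper's -- Cauchy--Schwarz combined with boundedness of the material coefficients -- but the execution differs in ways worth noting. The paper does not prove continuity of $a$ at all; it simply cites the standard linear-elasticity literature, and it dismisses $c$ with ``can be proven in a similar manner,'' so only $b$ is treated in detail. For $b$, the paper works componentwise: it writes $b(\Psi,\Phi)=\int_\Omega\sum_i\mu_{ii}\,\partial_i\Psi\,\partial_i\Phi\,\dx$ (implicitly assuming the permeability tensor is diagonal), applies Cauchy--Schwarz to each term $\int_\Omega\partial_i\Psi\,\partial_i\Phi\,\dx$ separately, and then needs the elementary inequality $\sqrt{\tfrac12 A+\tfrac12 B}\ge\tfrac12\sqrt{A}+\tfrac12\sqrt{B}$ to recombine the resulting sum of square roots into something dominated by $\|\Psi\|_1\|\Phi\|_1$, ending up with the constant $4\mu_0/\sqrt{2}$. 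Your version applies Cauchy--Schwarz once, globally in $L^2(\Omega)$, after a pointwise operator-norm bound on the coefficient tensor; this handles all three forms uniformly, does not require the permeability to be diagonal, avoids the square-root gymnastics, and produces the sharper constants $M_C$, $M_\mu$, $\tfrac12 M_e$. The one ingredient you use that the paper's componentwise route sidesteps is the pointwise bound $|\bm{\epsilon}(\bm{u})|\le|\nabla\bm{u}|$ for the symmetric gradient, which is immediate and which you correctly identify. In short: same principle, cleaner and more general execution on your side; the paper's added value is only that it makes the $b$-estimate fully elementary at the cost of a worse constant and an unstated diagonality assumption.
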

\begin{proof}
Since $a$ is the bilinear form of (uncoupled) linear elasticity, its continuity is already known (see, e.g., \cite{Braess2007}). 
To prove the continuity of the bilinear form $b$, we have to show that there exists a constant $C>0$ such that
\[
|b(\Psi,\Phi)| \leq C \|\Psi\|_{1} \|\Phi\|_{1} \qquad \text{for all} \quad \Psi,\Phi \in \mathcal{M}_{0},
\]
where $\|\Psi\|_{1}$
 is the $H^{1}(\Omega)$-norm.
Using \eqref{eq:bilinearformb}, it holds
\begin{eqnarray*}
	b(\Psi,\Phi) &=& \limint_{\Omega} \sum\limits_{i=1}^{3} \mu_{ii}(\frac{\partial\Psi}{\partial x_{i}})(\frac{\partial\Phi}{\partial x_{i}})\dx 
	\leq \underbrace{\max\limits_{i=1,2,3}\{\mu_{ii}\}}_{\eqqcolon \mu_{0}} \limint_{\Omega} |\sum\limits_{i=1}^{3}\frac{\partial \Psi}{\partial x_{i}} \frac{\partial\Phi}{\partial x_{i}}| \dx \\
	&\leq& \mu_{0} \sum\limits_{i=1}^{3}\sqrt{\limint_{\Omega}\left(\frac{\partial \Psi}{\partial x_{i}}\right)^{2}\dx \limint_{\Omega}\left(\frac{\partial \Phi}{\partial x_{i}}\right)^{2} \dx},
	\end{eqnarray*}
	where the last step follows from the Cauchy-Schwarz-inequality  \cite{McCluer2009}.
	Moreover, 
	\begin{eqnarray*}
 \|\Psi\|_{1} \|\Phi\|_{1} &\geq& |\Psi|_{1}|\Phi|_{1} = \sqrt{\limint_{\Omega} \sum_{i=1}^{3}(\frac{\partial \Psi}{\partial x_{i}})^{2} \dx} \sqrt{\limint_{\Omega} \sum_{i=1}^{3}\frac{\partial \Phi}{\partial x_{i}})^{2}\dx}\\
	&=&\sqrt{\sum_{i=1}^{3}\sum_{j=1}^{3}\limint_{\Omega}(\frac{\partial \Psi}{\partial x_{i}})^{2} \dx \limint_{\Omega}(\frac{\partial \Phi}{\partial x_{j}})^{2} \dx}
	\geq  \sqrt{ \sum_{i=1}^{3}\limint_{\Omega}(\frac{\partial \Psi}{\partial x_{i}})^{2} \dx \limint_{\Omega}(\frac{\partial \Phi}{\partial x_{i }})^{2} \dx.}
	\end{eqnarray*}
	It holds $
	\sqrt{\half A + \half B} > \half \sqrt{A} + \half \sqrt{B} \, \forall A, B 
	\in \R^+, A\neq B $, and thus
\begin{eqnarray*}
&\underbrace{\mu_{0} \frac{4}{\sqrt{2}}}_{\eqqcolon C}&\|\Psi\|_{1}\|\Phi\|_{1}
 \geq \mu_{0} \frac{4}{\sqrt{2}}\sqrt{ \sum_{i=1}^{3}\limint_{\Omega}(\frac{\partial \Psi}{\partial x_{i}})^{2} \dx \limint_{\Omega}(\frac{\partial \Phi}{\partial x_{i}})^{2} \dx}\\
&\geq& 2\mu_{0} \left(\sqrt{\limint_{\Omega}\left(\frac{\partial \Psi}{\partial x_{1}}\right)^{2} \dx \limint_{\Omega}\left(\frac{\partial \Phi}{\partial x_{1}}\right)^{2} \dx}
 +  \sum_{i=2}^{3}\sqrt{\limint_{\Omega}\left(\frac{\partial \Psi}{\partial x_{i}}\right)^{2} \dx \limint_{\Omega}\left(\frac{\partial \Phi}{\partial x_{i}}\right)^{2} \dx } \right)\\
  &\geq& b(\Psi,\Phi).
\end{eqnarray*}
The continuity of $c$ can be proven in a similar manner.
\end{proof}

Since the bilinear forms $a$ and $b$ are obviously symmetric and non-negative for all $\bm{v}\in \mathcal{V}_{0}$ and $\Phi \in \mathcal{M}_{0}$, the coupled system can be interpreted as a saddle point problem with penalty term $- b(\Psi,\Phi)$, see, e.g., \cite{Braess2007}. To address the question about the existence and uniqueness of solutions to such 
penalized saddle point problems, we have to consider the solvability of classical saddle point problems in the first instance. The corresponding criteria are given by \textit{Brezzi's Splitting Theorem} \cite{Brezzi1974}, which states that a saddle point problem in the structure of $(CP)^{w}$ without the term $- b(\Psi,\Phi)$ and with continuous bilinear forms $a$ and $c$, where, additionally, $a$ is symmetric and non-negative on $\mathcal{V}_{0}$,  has a unique solution, if and only if the following conditions hold:
 \begin{enumerate}[(i)]
\item  The bilinear form $a$ is elliptic on $\ker\mathcal{C} \defeq \{\bm{v}\in \mathcal{V}_{0}|\quad c(v, \Psi) = 0 \quad \forall  \Psi\in \mathcal{M}_{0}\}$, i.e.
\[ \exists \alpha > 0 \quad \text{s.t.}\quad a(\bm{v},\bm{v}) \geq \alpha \norm{\bm{v}}_{\mathcal{V}_{0}}^{2} \quad \forall \bm{v} \in \mathcal{V}_{0}. \]
\item The bilinear form $c$ satisfies the inf-sup-condition or Lady\v{z}enskaya-Babu\v{s}ka-Brezzi-condition
\[ \exists \beta > 0 \quad \text{s.t.}\quad \inf\limits_{\Psi\in \mathcal{M}_{0}}\sup\limits_{\bm{v} \in \mathcal{V}_{0}} \frac{c(v, \Psi)}{\norm{\bm{v}}_{\mathcal{V}_{0}}\norm{\Psi}}_{\mathcal{M}_{0}}\geq \beta. \]
\end{enumerate}

For penalized saddle point problems, additional conditions have to be imposed on the bilinear forms. A sufficient condition is the boundedness and non-negativity of the bilinear form defining the penalty term (for a proof, refer to \cite{Braess2007, Braess1996}). In this case, the solution is even uniformly bounded (see \cite{Braess2007}, Chapter 3). The following theorem shows that all conditions stated above are satisfied for the saddle point problem $(CP)^{w}$\n\n\n. Before stating the theorem, we need to characterize a certain property of the coupling matrix $\bm{e}$ in the first instance.
\begin{definition}(Minimal positivity)\label{def:minimalpositivity}
Let $\bm{A}$ be a matrix in $\R^{6 \times 3}\nn\n.$  \,\,\,\,We say that $\bm{A}$ satisfies the \textit{minimal positivity} property iff its components $a_{ij}$ fulfill the requirement
\begin{equation}\label{eq:minimalpositivity}
\min_{k = 1,...,6} \hat{A}_{k} > 0,
\end{equation}
with $\hat{A}_{k}$ defined as
\begin{align*}
\hat{A}_{1} &\defeq  \half (a_{11}+ a_{51} + a_{61}), \quad \hat{A}_{2} \defeq  \half (a_{22}+ a_{42} + a_{62}) \\
\hat{A}_{3} &\defeq  \half (a_{33}+ a_{43} + a_{53}), \quad \hat{A}_{4} \defeq  \half (a_{12}+a_{21}+a_{41}+a_{52}+a_{61}+a_{62}) \\
\hat{A}_{5} &\defeq  \half (a_{13}+a_{31}+a_{41}+a_{51}+a_{53}+a_{63}), \quad \hat{A}_{6} \defeq  \half (a_{23} + a_{32}+a_{42}+a_{43}+a_{52}+a_{63}).
\end{align*}
 \end{definition}
\begin{theorem}\label{thm:existenceuniqueness}
The saddle point problem $(CP)^{w}$ with (bi-)linear forms as defined in \eqref{eq:bilinearforma}-\eqref{eq:linearforms} has a unique and uniformly bounded solution if the coupling matrix $\bm{e}$ fulfills the minimal positivity requirement \eqref{eq:minimalpositivity}.
\end{theorem}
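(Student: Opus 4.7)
The plan is to invoke the extension of Brezzi's splitting theorem to penalized saddle point problems outlined in the paragraph just before the theorem. Concretely, it suffices to establish: continuity of $a$, $b$, $c$; symmetry and non-negativity of $a$; ellipticity of $a$ on $\ker\mathcal{C}$; the inf-sup condition for the coupling form $c$; and boundedness together with non-negativity of the penalty form $b$. Given these ingredients, that theorem yields uniqueness and an a priori stability estimate, from which the claimed uniform bound on the solution follows at once.

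Most of these items can be dispatched with standard tools. Continuity of the three bilinear forms is Lemma~\ref{continuity}; symmetry of $a$ is inherited from the Voigt-symmetries of $\bm{C}^{H}$, and its ellipticity on all of $\mathcal{V}_{0}$ (in particular on $\ker\mathcal{C}$) follows from the positive definiteness of $\bm{C}^{H}$ together with Korn's inequality, using that $\Gamma_{D}$ carries homogeneous Dirichlet data on a set of positive surface measure. Non-negativity of $b$ is immediate from the positive definiteness of the magnetic permeability $\bm{\mu}^{\epsilon}$.

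The main step is the inf-sup condition for $c$, and this is the reason for introducing Definition~\ref{def:minimalpositivity}. For each $\Psi\in\mathcal{M}_{0}$ my plan is to supply an explicit test function that realises the supremum from below. The natural candidate is $\bm{v}_{\Psi}\defeq(\Psi,\Psi,\Psi)^{T}\in\mathcal{V}_{0}$; a direct computation gives $\|\bm{v}_{\Psi}\|_{\mathcal{V}_{0}}=\sqrt{3}\,\|\Psi\|_{\mathcal{M}_{0}}$, while the Voigt representation of $\bm{\epsilon}(\bm{v}_{\Psi})$ reads $(\partial_{1}\Psi,\partial_{2}\Psi,\partial_{3}\Psi,\partial_{2}\Psi+\partial_{3}\Psi,\partial_{1}\Psi+\partial_{3}\Psi,\partial_{1}\Psi+\partial_{2}\Psi)^{T}$. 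Inserting this into \eqref{eq:bilinearformc} and collecting the resulting integrand by the six products $\partial_{i}\Psi\,\partial_{j}\Psi$ produces coefficients that exactly reproduce $\hat{A}_{1},\dots,\hat{A}_{6}$ of Definition~\ref{def:minimalpositivity} (identifying $a_{ij}$ with $e_{ij}$). Minimal positivity then has to be turned into a pointwise lower bound $c(\bm{v}_{\Psi},\Psi)\ge\gamma\int_{\Omega}|\nabla\Psi|^{2}\,\mathrm{d}x$; combined with Poincar\'e's inequality on $\mathcal{M}_{0}$ this upgrades the seminorm to the full $H^{1}$ norm and produces an inf-sup constant $\beta>0$.

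I expect the pointwise quadratic-form estimate to be the main obstacle: the diagonal entries $\hat{A}_{1},\hat{A}_{2},\hat{A}_{3}>0$ are directly controlled by minimal positivity, so the work lies in showing that the remaining cross-terms carrying $\hat{A}_{4},\hat{A}_{5},\hat{A}_{6}$ cannot render the quadratic form indefinite. Once that algebraic inequality is secured, feeding the inf-sup constant, the ellipticity constant of $a$, and the boundedness and non-negativity of $b$ into Brezzi's theorem for penalized problems delivers both uniqueness and the uniform bound stated in the theorem.
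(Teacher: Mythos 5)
Your proposal follows essentially the same route as the paper: the same reduction to Brezzi's theory for penalized saddle point problems, the same verification of continuity, coercivity of $a$ via Korn, and non-negativity of $b$, and crucially the same test function $\bm{v}=(\Psi,\Psi,\Psi)^{T}$ turning $c$ into the quadratic form with coefficients $\hat{E}_{1},\dots,\hat{E}_{6}$. The one step you flag as the remaining obstacle is closed in the paper by bounding each coefficient below by $M=\min_k\hat{E}_k$ and then using the sum-of-squares identity $4\sum_i x_i^2+2\sum_{i<j}x_ix_j=\sum_i x_i^2+\bigl(\sum_i x_i^2+\sum_{i<j}(x_i+x_j)^2\bigr)$ to obtain $c(\bm{v},\Psi)\ge\frac{1}{4}M\int_\Omega|\nabla\Psi|^2\,\mathrm{d}x$, after which Poincar\'e--Friedrichs gives the full $H^1$ norm exactly as you describe.
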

\begin{proof}
Lemma \ref{continuity} states that all three bilinear forms are continuous on the corresponding spaces. Moreover, both $a(\cdot,\cdot)$ and $b(\cdot,\cdot)$ are obviously symmetric and non-negative on $\mathcal{V}_{0}$ and $\mathcal{M}_{0}$, respectively. The coercivity of $a(\cdot,\cdot)$ follows directly from Korn's inequality (a proof can be found e.g. in \cite{Braess2007} or \cite{Ciarlet1978}). In fact, $a$ is coercive on the whole space $\mathcal{V}_{0}$ rather than just on $\ker \mathcal{C}\subset \mathcal{V}_{0}$. To show the inf-sup condition for the bilinear form $c(\bm{v},\Psi)$, we will prove the equivalent formulation: Show that $\exists \beta>0$ s.t.
\begin{equation*}
\sup\limits_{\bm{v}\in \mathcal{V}_{0}} \frac{c(\bm{v},\Psi)}{\|\bm{v}\|_{\mathcal{V}}} \geq \beta \|\Psi\|_{\mathcal{M}} \quad \forall \Psi\in \mathcal{M}_{0}.
\end{equation*}
The coupled bilinear form $c(\bm{v},\Psi)$ is given by  
\begin{eqnarray*}
c(\bm{v},\Psi) &=& \half\limint_{\Omega} \sum\limits_{j = 1}^3\frac{\partial v_{j}}{\partial x_{j}}\left(\sum_{i=1}^{3}e_{ji}\frac{\partial \Psi}{\partial x_{i}}\right)
+ \left(\frac{\partial v_{2}}{\partial x_{3}} + \frac{\partial v_{3}}{\partial x_{2}}\right)\left(\sum_{i=1}^{3}e_{4i}\frac{\partial \Psi}{\partial x_{i}}\right)\\
&+& \left(\frac{\partial v_{1}}{\partial x_{3}} + \frac{\partial v_{3}}{\partial x_{1}}\right) \left(\sum_{i=1}^{3}e_{5i}\frac{\partial \Psi}{\partial x_{i}}\right) 
+ \left(\frac{\partial v_{1}}{\partial x_{2}} + \frac{\partial v_{2}}{\partial x_{1}}\right)\left(\sum_{i=1}^{3}e_{6i}\frac{\partial \Psi}{\partial x_{i}}\right) \dx. 
\end{eqnarray*}
Choosing $v_{i} = \Psi$ for $i = 1,2,3$, the bilinear form transforms into
\begin{equation*}
c(\bm{v},\Psi) = c^{*}(\Psi,\Psi)\defeq \half\limint_{\Omega} 2\sum_{i = 1}^3 \left(\frac{\partial\Psi}{\partial x_{i}}\right)^2 \hat{E}_{i} + \frac{\partial\Psi}{\partial x_{1}}\frac{\partial\Psi}{\partial x_{2}}\hat{E}_{4} + \frac{\partial \Psi}{\partial x_{1}}\frac{\partial\Psi}{\partial x_{3}}\hat{E}_{5} + \frac{\partial \Psi}{\partial x_{2}}\frac{\partial\Psi}{\partial x_{3}}\hat{E}_{6}\dx
\end{equation*}
with $\hat{E}_{i}$, $i=1,..,6$ as given in Definition~\ref{def:minimalpositivity}.\ Since $\bm{e}$ satisfies the minimal positivity property \eqref{eq:minimalpositivity}, i.e.\ $M\defeq\min\limits_{i = 1,...,6}\hat{E}_{i} > 0,$ we obtain 
\begin{eqnarray*}
c(\bm{v},\Psi) &\geq& \frac{1}{4}M\limint_{\Omega} 4 \sum_{i = 1}^3 \left(\frac{\partial\Psi}{\partial x_{i}}\right)^2  + \sum_{\substack{i,j = 1\\ i\neq j}}^3 \frac{\partial\Psi}{\partial x_{i}}\frac{\partial\Psi}{\partial x_{j}} \dx
= \frac{1}{4}M\limint_{\Omega} \sum_{i = 1}^3 \left(\frac{\partial\Psi}{\partial x_{i}}\right)^2 \dx \\
&+& \frac{1}{4}M\limint_{\Omega} \sum_{i = 1}^3 \left(\frac{\partial\Psi}{\partial x_{i}}\right)^2 + \left(\frac{\partial\Psi}{\partial x_{1}}+\frac{\partial\Psi}{\partial x_{2}}\right)^2 + \left(\frac{\partial\Psi}{\partial x_{1}}+\frac{\partial\Psi}{\partial x_{3}}\right)^2 + \left(\frac{\partial\Psi}{\partial x_{2}}+\frac{\partial\Psi}{\partial x_{3}}\right)^2 \dx.
\end{eqnarray*}
Hence,
\begin{equation*}
c(\bm{v},\Psi) \geq  \frac{1}{4}M|\bm{v}|_{\mathcal{V}} + \frac{1}{4}M\limint_{\Omega} \bm{\epsilon}(\bm{v})\cdot\bm{\epsilon}(\bm{v}) \dx
\geq\frac{1}{4}M C'\|\bm{v}\|_{\mathcal{V}}^{2},
\end{equation*}
where $|\bm{v}|_{\mathcal{V}}$ is the seminorm in $\mathcal{V}=H^1(\Omega)^3$ and the constant
$C' = C'(\Omega, \Gamma_{D})$ stems from Korn's inequality. Furthermore, we have
\begin{equation*}
\|\bm{v}\|_{\mathcal{V}} = \sqrt{\sum_{i=1}^{3}(\|v_{i}\|_{0} +|v_{i}|_{1})^{2}} \geq \sqrt{\sum_{i=1}^{3} |v_{i}|_{1}^{2}}
= \sqrt{\limint_{\Omega} \sum_{i,j=1}^{3}\left(\frac{\partial v_{i}}{\partial x_{j}}\right)^{2}\dx},
\end{equation*}
where $\norm{\cdot}_{0}$ and  $\norm{\cdot}_{1}$ denote the $L^{2}(\Omega)$-norm and the $H^{1}(\Omega)$-norm, respectively. 
On the other hand, 
\begin{equation*}
\|\Psi\|_{1} =\sqrt{(\|\Psi\|_{0} + |\Psi|_{1})^{2}} 
		\leq \sqrt{s^{2}+1}|\Psi|_{1},
\end{equation*}		
with a constant $s=s(\Omega)$ from the Poincaré-Friedrichs inequality \cite{Braess2007}, assuming that $\Omega$ can be included in a cube with side length $s$. Using the above estimates for $\|\bm{v}\|_{\mathcal{V}}$ and $\|\Psi\|_{1}$ and setting $\beta \defeq \frac{\sqrt{3}MC'}{4\sqrt{s^2+1}},$ we derive the following chain of inequalities: 
\begin{equation*}
\sup\limits_{\bm{v}\in \mathcal{V}_{0}} \frac{c(\bm{v},\Psi)}{{\norm{\bm{v}}}_{\mathcal{V}}}
 \geq\frac{c(\bm{v},\Psi)}{{\norm{\bm{v}}}_{\mathcal{V}}}
 \geq \frac{1}{4}M C'\norm{\bm{v}}_{\mathcal{V}} \\
\geq \frac{\sqrt{3}}{4}M C'\sqrt{\limint_{\Omega}\sum_{i = 1}^3 \left(\frac{\partial\Psi}{\partial x_{i}}\right)^2 \dx} \geq \beta\norm{\Psi}_{1}.
\end{equation*}
\end{proof} 
\begin{remark}
Since the constant in the inf-sup estimate depends on the components of the coupling matrix $\bm{e}$, for weakly magnetostrictive materials, i.e.\ for $e_{ij}\rightarrow 0$, the estimate in the proof of Theorem~\ref{thm:existenceuniqueness} does not provide a satisfactory bound. However, most of the magnetostricive materials used in industrial applications are so-called \textit{Giant Magnetostrictive Materials} for which the quantity $M$ from the proof of Theorem~\ref{thm:existenceuniqueness} can be assumed to be ``sufficiently large".
\end{remark}

In fact, the $\mathcal{V}_{0}$-ellipticity - condition is also fulfilled for the bilinear form $b$:
\begin{lemma}\label{coercivity}
The bilinear form $b$ defined in \eqref{eq:bilinearformb} is coercive on the space $\mathcal{M}_{0}$.
\end{lemma}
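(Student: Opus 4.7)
The plan is to combine the positive definiteness of the physical permeability matrix $\bm{\mu}^{\epsilon}$ with the Poincaré-Friedrichs inequality already exploited in the proof of Theorem~\ref{thm:existenceuniqueness}. Consistent with the treatment in Lemma~\ref{continuity}, I will take $\bm{\mu}^{\epsilon}$ to be diagonal with strictly positive entries $\mu_{ii}>0$ (if one prefers the more general symmetric positive definite case, $\mu_{\min}$ is simply the smallest eigenvalue; nothing else changes).

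First I would produce a lower bound on $b(\Psi,\Psi)$ in terms of the $H^{1}$-seminorm. Setting $\mu_{\min}\defeq\min_{i=1,2,3}\mu_{ii}>0$ and using the diagonal form,
\[
b(\Psi,\Psi)=\limint_{\Omega}\sum_{i=1}^{3}\mu_{ii}\left(\frac{\partial\Psi}{\partial x_{i}}\right)^{2}\dx \;\geq\; \mu_{\min}\,|\Psi|_{1}^{2}.
\]

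Next, since elements of $\mathcal{M}_{0}$ vanish on $\Gamma_{m,D}$, the Poincaré-Friedrichs inequality yields a constant $s=s(\Omega)$ with $\|\Psi\|_{0}\leq s\,|\Psi|_{1}$, exactly as invoked in the proof of Theorem~\ref{thm:existenceuniqueness}. Consequently
\[
\|\Psi\|_{1}^{2}=\|\Psi\|_{0}^{2}+|\Psi|_{1}^{2}\leq (s^{2}+1)\,|\Psi|_{1}^{2},
\]
so that combining both estimates produces
\[
b(\Psi,\Psi)\;\geq\;\frac{\mu_{\min}}{s^{2}+1}\,\|\Psi\|_{1}^{2}\qquad\forall\,\Psi\in\mathcal{M}_{0},
\]
which is the desired coercivity with constant $\gamma\defeq\mu_{\min}/(s^{2}+1)>0$.

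Unlike the inf-sup proof for $c(\cdot,\cdot)$, there is no genuine obstacle here: the only hypothesis used beyond positivity of $\bm{\mu}^{\epsilon}$ is that $\Gamma_{m,D}$ has positive surface measure so that Poincaré-Friedrichs applies on $\mathcal{M}_{0}$, and this is tacitly assumed throughout the paper. The result will then feed naturally into the penalized-saddle-point framework cited after Theorem~\ref{thm:existenceuniqueness}, since $b$ is now seen to be not merely non-negative but genuinely coercive on the whole of $\mathcal{M}_{0}$.
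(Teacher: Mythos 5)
Your proof is correct and follows essentially the same route as the paper: a lower bound $b(\Psi,\Psi)\geq \mu_{\min}|\Psi|_{1}^{2}$ via the minimal diagonal entry of $\bm{\mu}^{\epsilon}$, combined with the Poincar\'e-Friedrichs estimate $\|\Psi\|_{1}^{2}\leq (s^{2}+1)|\Psi|_{1}^{2}$, yielding the identical coercivity constant $\gamma=\mu_{\min}/(s^{2}+1)$. Your explicit remark that $\Gamma_{m,D}$ must have positive surface measure for Poincar\'e-Friedrichs to apply is a small but welcome clarification that the paper leaves tacit.
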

\begin{proof}
For $\Psi\in \mathcal{V}_{0}$, the following estimates hold:
\begin{eqnarray*}
r \norm{\Psi}_{1}^2 &=& r (\norm{\Psi}_{0}^2 + |\Psi|_{1}^{2}) \leq r (s^{2}+1) |\Psi|_{1}^{2} 
= r (s^2+1) \limint_{\Omega} \sum_{i=1}^{3} \left(\frac{\partial\Psi}{\partial x_{i}}\right)^{2}\dx\\
&\leq& (s^2+1) \limint_{\Omega} \sum_{i=1}^{3} \mu_{ii}\left(\frac{\partial\Psi}{\partial x_{i}}\right)^{2}\dx = (s^2+1) b(\Psi,\Psi),
\end{eqnarray*}
where the parameter $s$ stems from the Poincar\'{e}-Friedrichs-inequality and \\$r \defeq \min\limits_{i=1,2,3} \mu_{ii}$. Thus, coercivity holds with the constant $\gamma \defeq \frac{r}{(s^{2}+1)}$.
\end{proof}
 As a consequence, both $a$ and $b$ are coercive and define inner products on the corresponding spaces. Moreover, the coercivity of $b$ can be used to show the unique solvability of the problem $(CP)^{w})$ directly from the application of the Lax-Milgram-Lemma (following \cite{BoffiBrezziFortin2013}) to the combined bilinear form 
\[
\mathcal{A}:(\mathcal{V}_{0}\times \mathcal{M_{0}})\times (\mathcal{V}_{0}\times \mathcal{M_{0}}) \rightarrow \R, \quad e((\bm{u},\Psi), (\bm{v},\Phi)) = a(\bm{u},\bm{v}) + c(\bm{v},\Psi) - c(\bm{u},\Phi) + b(\Psi,\Phi),
\]
which transfers the system of two coupled equations to a problem with a single equation: \\
Find $(\bm{u},\Psi) \in \mathcal{V}_{0} \times \mathcal{M}_{0}$, such that
\begin{equation*}
\mathcal{A}((\bm{u},\Psi), (\bm{v},\Phi)) = l(\bm{v}) - m(\Phi) \quad \forall (\bm{v},\Phi) \in \mathcal{V}_{0}\times M_{0}.
\end{equation*} 
Using the coercivity of the bilinear forms $a$ and $b$ (as well as the continuity of all three bilinear forms), we can estimate 
\begin{eqnarray*}
\alpha\norm{\bm{v}}_{\mathcal{V}_{0}}^{2} + \gamma \norm{\Phi}_{\mathcal{M}_{0}}^{2} &\leq& a(\bm{v},\bm{v}) + b(\Psi,\Phi) \\
&=& a(\bm{v},\bm{v}) + b(\Phi,\Phi) + c(\bm{v},\Phi) - c(\bm{v},\Phi) = \mathcal{A}((\bm{v},\Phi), (\bm{v},\Phi)), 
\end{eqnarray*}
implying that there exists a $\delta \defeq \min\{\alpha,\gamma\}$ such that $e((\bm{v},\Phi), (\bm{v},\Phi))\geq \delta\norm{(\bm{v},\Phi)}_{\mathcal{V}_{0}\times\mathcal{M}_{0}}$ for all $(\bm{v},\Phi)\in\mathcal{V}_{0}\times\mathcal{M}_{0}$, where $\norm{(\bm{v},\Phi)}_{\mathcal{V}_{0}\times\mathcal{M}_{0}}^2 \defeq \norm{\bm{v}}_{\mathcal{V}_{0}}+\norm{\Phi}_{\mathcal{M}_{0}}$. Hence, the combined bilinear form $e$ is continuous and coercive and Lax-Milgram yields the existence and uniqueness of the solution $(\bm{u},\Psi)$ of the above problem.\\
However, exploiting the coercivity of the uncoupled bilinear forms does not necessarily let us avoid showing the inf-sup-condition for $c$.  In fact, the stability estimate that can be obtained by using just the coercivity properties of $a$ and $b$  does not yield a satisfactory result for our problem. A detailed discussion of the stability aspects of penalized saddle point problems can be found in \cite{BoffiBrezziFortin2013}.

\section{Solvability in the discrete case}\label{subsec:discretesolvability}

Consider now finite dimensional subspaces $V^{h}\subset \mathcal{V}_{0}$ and $M^{h}\subset \mathcal{M}_{0}$ of the function spaces defined above, where $h > 0$ denotes the mesh parameter. Then, the Galerkin approximation of the coupled saddle point problem reads

\noindent\textbf{Problem $(CP)^{w}_{h}$}. Find $(\bm{u}^{h},\Psi^{h}) \in V^{h} \times M^{h}$, such that
\begin{eqnarray*}
a(\bm{u}^{h},\bm{v}^{h})+c(\bm{v}^{h},\Psi^{h}) &=& l(\bm{v}^{h}) \qquad\qquad \;\;\, \bm{v}^{h}\in V^{h} \\
c(\bm{u}^{h},\Phi^{h}) - t^{2} b(\Psi^{h},\Phi^{h}) &=& m(\Phi^{h}) \qquad\qquad \Phi^{h} \in M^{h}.
\end{eqnarray*}
In contrast to problems with a single bounded, coercive bilinear form, that can be studied using the Lax-Milgram theorem (e.g. in~\cite{Braess2007} or \cite{Brennerscott1994}), the well-posedness of discrete saddle point problems, in general, cannot be automatically deduced from their continuous counterpart:  On the one hand, the coercivity of the bilinear form $a$ on $\ker(\mathcal{C})$ does not necessarily imply its coercivity on $(\ker\mathcal{C})^{h}\defeq \{\bm{v}^{h}\in V^{h}| c(\bm{v}^{h}, \Psi^h) = 0 \,\,\forall \Psi^{h}\in M^{h}\}$ (which is the case, for example, for the Stokes Problem (e.g. in ~\cite{Braess2007}). On the other hand, the discrete inf-sup condition on $V^{h} \times M^{h}$ cannot be directly deduced from the continuous one on $\mathcal{V}_{0}\times \mathcal{M}_{0}$ since the supremum over a set $Y$ is greater than or equal to the supremum over a subset $Y^{h}\subset Y$.\\
In case of the penalized saddle point problem $(CP)^w$, however, the continuity of the bilinear form $c$ as well as the coercivity of the bilinear form $a$ on the corresponding discrete spaces are directly inherited from the continuous case. The discrete inf-sup condition is then the only assumption that needs verification. In this context, special attention has to be paid to the appropriate choice of the discrete spaces $V^{h}$ and $M^{h}$, since it is crucial in satisfying the assumption. 

If we select Lagrangian finite elements for the discretization, however, we can show that the discrete inf-sup condition is satisfied. To this end, the same mesh ${\cal T}_h$ and the same piecewise polynomial approximation are used for both subspaces $\mathcal{V}^h$ and 
$\mathcal{M}^h$ by setting $\mathcal{V}^h = S_k(\Omega,{\cal T}_h)^3$ and  
$\mathcal{M}^h = S_k(\Omega,{\cal T}_h)$ where $S_k(\Omega,{\cal T}_h)$ stands for the 
conforming Lagrangian finite element space of degree $k$.
Then the bilinear form $c$ satisfies the discrete inf-sup condition with a constant that is independent of the mesh parameter $h$, which yields an optimal convergence rate of the Galerkin method for the coupled problem:

\begin{theorem}\label{th:discreteinfsup}
For $\mathcal{V}^h = S_k(\Omega,{\cal T}_h)^3$ and  
$\mathcal{M}^h = S_k(\Omega,{\cal T}_h)$,
 the bilinear form $c$ defined in \eqref{eq:bilinearformc} satisfies the discrete inf-sup condition
\begin{equation}\label{eq:discreteinfsup}
\exists \hat{\beta}>0, \quad \text{such that} \quad \inf\limits_{\Psi^{h}\in \mathcal{M}^{h}}\sup\limits_{\bm{v}^{h}\in \mathcal{V}^{h}}\frac{c(\bm{v}^{h},\Psi^{h})}{\|\bm{v}^{h}\|_{\mathcal{V}^{h}}\|\Psi_{h}\|_{\mathcal{M}^{h}}}\geq \hat{\beta}, 
\end{equation}
 with a constant $\hat{\beta}$ independent of $h$.
\end{theorem}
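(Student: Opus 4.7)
My plan is to transfer the argument used for the continuous inf-sup condition in Theorem~\ref{thm:existenceuniqueness} almost verbatim to the discrete setting, exploiting the structural fact that both $\mathcal{V}^h$ and $\mathcal{M}^h$ are built from the same scalar Lagrangian space $S_k(\Omega,\mathcal{T}_h)$ of identical polynomial degree. This matching of degrees is what makes the explicit test function from the continuous proof automatically conforming in the discrete case.

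The first step is, for an arbitrary $\Psi^h \in \mathcal{M}^h$, to construct the diagonal lift $\mathbf{v}^h \defeq (\Psi^h,\Psi^h,\Psi^h)$. Since each scalar component lies in $S_k(\Omega,\mathcal{T}_h)$, this vector field belongs to $\mathcal{V}^h$. Substituting into $c(\mathbf{v}^h,\Psi^h)$ reproduces exactly the quadratic form $c^*(\Psi^h,\Psi^h)$ that appears in the proof of Theorem~\ref{thm:existenceuniqueness}, so the minimal positivity of $\mathbf{e}$ gives the same lower bound $c(\mathbf{v}^h,\Psi^h) \geq \tfrac{1}{4}MC'\|\mathbf{v}^h\|_\mathcal{V}^2$. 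The crucial observation here is that the Korn constant $C'=C'(\Omega,\Gamma_D)$ is a continuous $H^1$-statement, so it applies verbatim on any conforming subspace and introduces no $h$-dependence.

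Combining this with the Poincaré--Friedrichs inequality on $\mathcal{M}^h \subset \mathcal{M}_0$ (whose constant $s$ depends only on $\Omega$) and the trivial identity $\|\mathbf{v}^h\|_\mathcal{V} \geq \sqrt{3}\,|\Psi^h|_1$, I would chain the same estimates as in the continuous proof to conclude
\[
\sup_{\mathbf{w}^h \in \mathcal{V}^h}\frac{c(\mathbf{w}^h,\Psi^h)}{\|\mathbf{w}^h\|_\mathcal{V}} \;\geq\; \frac{c(\mathbf{v}^h,\Psi^h)}{\|\mathbf{v}^h\|_\mathcal{V}} \;\geq\; \hat{\beta}\,\|\Psi^h\|_1,
\]
with $\hat{\beta} \defeq \tfrac{\sqrt{3}MC'}{4\sqrt{s^2+1}}$, which is literally the continuous constant and therefore independent of~$h$.

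The main obstacle --- really the only non-routine point --- is to verify that the diagonal lift $\mathbf{v}^h$ actually lies in the constrained space $\mathcal{V}^h$, that is, that it respects the elastic Dirichlet condition on $\Gamma_D$. This is precisely where the equal-polynomial-degree assumption is essential: if $\mathcal{V}^h$ and $\mathcal{M}^h$ were built from different polynomial orders or from incompatible meshes, the diagonal construction would fall outside $\mathcal{V}^h$ and the whole strategy would collapse. Once this conformity is checked (under the usual compatibility between $\Gamma_D$ and $\Gamma_{m,D}$), everything else is bookkeeping, because every constant invoked --- minimal positivity of $\mathbf{e}$, Korn, and Poincaré--Friedrichs --- is intrinsic to $\Omega$ and the material coefficients and carries no dependence whatsoever on the triangulation $\mathcal{T}_h$.
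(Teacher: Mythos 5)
Your proposal follows exactly the route the paper itself indicates: it states only that the proof ``proceeds along the same lines as the proof in the continuous case,'' with the equal polynomial degree of $\mathcal{V}^h$ and $\mathcal{M}^h$ being the key point that makes the diagonal test function $\bm{v}^h=(\Psi^h,\Psi^h,\Psi^h)$ conforming and lets Korn's inequality apply with an $h$-independent constant. You have correctly fleshed out that sketch (and even flagged the $\Gamma_D$ versus $\Gamma_{m,D}$ compatibility issue, which the paper leaves implicit), so the argument matches the paper's intended proof.
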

The proof proceeds along the same lines as the proof in the continuous case.
The important feature is the special choice of the elastic and magnetic basis functions as polynomials of the same degree, which enables the use of Korn's inequality. 
 
 \section{Conclusion}
In this paper, we derived a coupled magneto-elasto-static model under the assumption of linear and reversible material behavior using a minimum energy principle. As a result, we obtained a system of coupled partial differential equations for the elastic displacement and the magnetic scalar potential. In its weak form, the system contains three different bilinear forms of elastic, magnetic and coupled quantities and represents a saddle point problem, for which the existence, uniqueness and uniform boundedness of the solution were proven for the continuous and discrete cases. In particular, we have shown that, in both cases, the coupled bilinear form satisfies an inf-sup condition which is essential for the stability of the problem.

\footnotesize{\bibliography{Paper_Harutyunyan}}

\end{document}